\definecolor{szin}{rgb}{0,0.44,0.4}
\definecolor{szin2}{rgb}{0.902,0.2705,0}
\definecolor{szin3}{rgb}{0,0.5,0}
\newtheorem{theorem}{Theorem}
\newtheorem{proposition}{Proposition}
\newcommand{\pitau}{{(\pi,\tau)}}
\newcommand{\pioverlinetauoverline}{{(\overline{\pi},\overline{\tau})}}
\newcommand{\pitauoverline}{{(\pi,\overline{\tau})}}
\newcommand{\taupi}{{(\tau,\pi)}}
\newcommand{\GP}[1]{\mathcal{GP}_{#1}}
\newcolumntype{L}[1]{>{\raggedright\let\newline\\\arraybackslash\hspace{0pt}}m{#1}}
\newcolumntype{C}[1]{>{\centering\let\newline\\\arraybackslash\hspace{0pt}}m{#1}}
\newcolumntype{R}[1]{>{\raggedleft\let\newline\\\arraybackslash\hspace{0pt}}m{#1}}
\begin{document}

\begin{frontmatter}

\title{Small cycles, generalized prisms and Hamiltonian cycles in the Bubble-sort graph} 

\author[evk]{Elena V. Konstantinova}
\ead{e\_konsta@math.nsc.ru}
\address[evk]{Sobolev Institute of Mathematics, Novosibisk State University, Novosibirsk, Russia}

\author[anm]{Alexey N. Medvedev\corref{cor1}}
\ead{an\_medvedev@yahoo.com}
\address[anm]{ICTEAM, Universit\'{e} catholique de Louvain, Louvain-la-Neuve, Belgium}
\cortext[cor1]{Corresponding author}

\begin{abstract} 

The Bubble-sort graph $BS_n,\,n\geqslant 2$, is a Cayley graph over the symmetric group $Sym_n$ generated by transpositions from the set $\{(1 2), (2 3),\ldots, (n-1\, n)\}$. It is a bipartite graph containing all even cycles of length $\ell$, where $4\leqslant \ell\leqslant n!$. We give an explicit combinatorial characterization of all its $4$- and $6$-cycles. Based on this characterization, we define generalized prisms in $BS_n,\,n\geqslant 5$, and present a new approach to construct a Hamiltonian cycle based on these generalized prisms.

\end{abstract}

\begin{keyword}
Cayley graphs \sep Bubble-sort graph \sep $1$-skeleton of the Permutahedron \sep Coxeter presentation of the symmetric group \sep $n$-prisms \sep generalized prisms \sep Hamiltonian cycle
\end{keyword}

\end{frontmatter}

\section{Introduction}\label{Fact}

We start by introducing a few definitions. The Bubble-sort graph $BS_n=Cay(Sym_n, \mathcal{B}),\,n\geqslant 2$, is a Cayley graph over the symmetric group $Sym_n$ of permutations $\pi=[\pi_1 \pi_2 \ldots \pi_n]$, where $\pi_i=\pi(i)$, $ 1\leqslant i \leqslant n$, with the generating set $\mathcal{B}=\{b_i \in Sym_n: 1 \leqslant i \leqslant n-1\}$ of all bubble-sort transpositions $b_i$ swapping the $i$-th and $(i+1)$-st elements of a permutation $\pi$ when multiplied on the right, i.e. $[\pi_1 \pi_2 \ldots \pi_{i-1} \pi_i \pi_{i+1} \ldots \pi_n] b_i=[\pi_1 \pi_2 \ldots \pi_{i-1}  \pi_{i+1} \pi_i  \ldots \pi_n]$. It is a connected bipartite $(n-1)$--regular graph of order $n!\,$ and diameter $diam(BS_n)={n \choose 2}$. Since this graph is bipartite it does not contain odd cycles but it contains all even $l$--cycles where $l=4, \ldots, n!$~\cite{KA06}. The hamiltonicity of this graph also follows from results in~\cite{J63,KL75,T62}. 

The graph $BS_n,\,n\geqslant 3$, is constructed from $n$ copies $BS_{n-1}(i),\,1\leqslant i \leqslant n$, with the vertex set $\{[\pi_1\pi_2\dots\pi_{n-1}i]\}$, and vertices between copies are connected with external edges from the set $\{\{[\pi_1\pi_2\dots\pi_{n-1} i],[\pi_1\pi_2\dots i \pi_{n-1}]\},\,1\leqslant i\leqslant n\}$, where $\pi_k\in \{1,\dots,n\}\backslash\{i\}$, $1\leqslant k \leqslant n-1$. For example, $BS_3$ is isomorphic to a $6$--cycle, and $BS_4$ is constructed from $4$ copies of $BS_3\cong C_6$ (see Figure~\ref{fig.bs3_bs4}).

The generating set $\mathcal{B}$ plays an important role in computer science for generating all permutations~\cite{J63,K,T62}, in the theory of polytopes where $BS_n$ is considered as $1$-skeleton of the Permutahedron~\cite{T06}, in knot theory~\cite{BB05} and the theory of reflection groups and Coxeter groups~\cite{H90,LS01,O91}. It is known that all transpositions $b_i=(i\ i+1)$,  $1\leqslant i \leqslant n-1$, from the set $\mathcal{B}$ can be considered as $(n-1)$ braids on $n$ strands forming the braid group $B_n$. There is a surjective group homomorphism $B_n \rightarrow Sym_n$ from the braid group into the symmetric group so that the Coxeter presentation of the symmetric group is given by the following $\binom{n}{2}$ braid relations~\cite{KT08}:

\begin{equation}\label{ebraid6}
b_ib_{i+1}b_i=b_{i+1}b_ib_{i+1},  1\leqslant i \leqslant n-2;
\end{equation}
\begin{equation}\label{ebraid4}
b_jb_i=b_ib_j, 1\leqslant i < j-1 \leqslant n-2.
\end{equation}

It is known (see, for example \S1.9 in~\cite{H90} or~\cite{LS01}) that if any permutation $\pi$ is presented by two minimal length expressions $u$ and $v$ in terms of generating elements from the set $\mathcal{B}$, then $u$ and $v$ can be transformed to each other using only the braid relations~(\ref{ebraid6}) and~(\ref{ebraid4}). However, these relations are not the only set and not the smallest set for generating $B_n$.

\begin{figure*}[t]
\centering
\includegraphics[width = 0.9\textwidth]{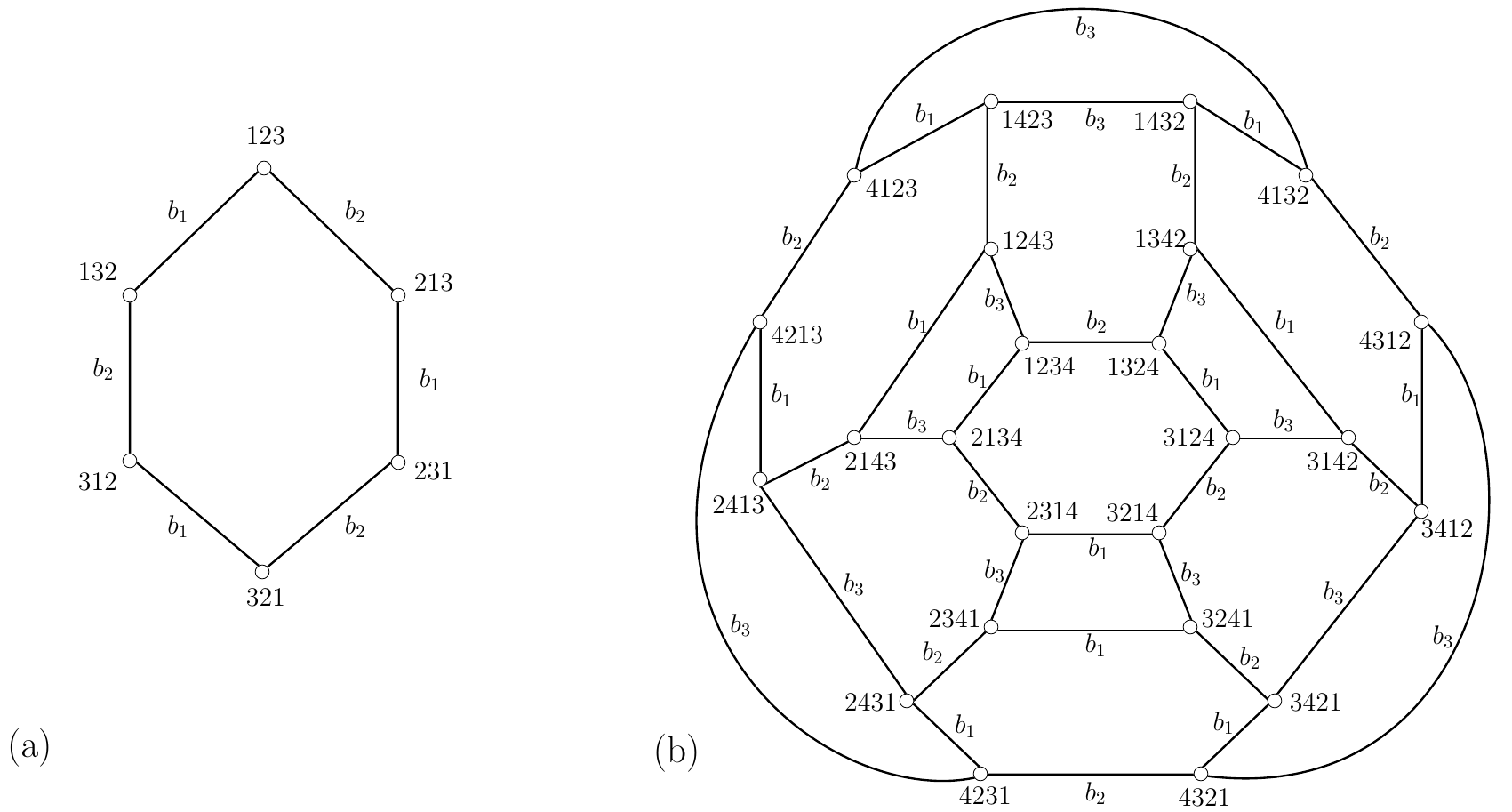}
\caption{Examples of the Bubble-Sort graphs (a) $BS_3$ and (b) $BS_4$}
\label{fig.bs3_bs4}
\end{figure*}

Let us note that the relations~(\ref{ebraid6}) and~(\ref{ebraid4}) form a $6$-cycle and a $4$-cycle in the Bubble-sort graph $BS_n$ for any $n\geqslant 3$ and for any $n\geqslant 4$, correspondingly. As it follows below, we use a cycle description presented in~\cite{KM10} to characterize small cycles in the Pancake graph, and then applied in the Star graph \cite{KM14}.

Two simple paths in a graph are called \textit{non-intersecting}, if they have no common vertices. A sequence of transpositions $C_{\ell}= b_{i_0}\,\ldots\, b_{i_{\ell-1}}$, where $1\leqslant i_j \leqslant n-1$, and $i_j \neq i_{j+1}$ for any $0\leqslant j \leqslant \ell-1$, such that $\pi\,b_{i_0}\ldots b_{i_{\ell-1}}=\pi$, where $\pi \in Sym_n$, is said to be {\it a form of a cycle $C_\ell$ of length $\ell$} in the Bubble-sort graph. A cycle $C_{\ell}$ of length ${\ell}$ is called an ${\ell}$--cycle.  It is evident that any ${\ell}$--cycle can be presented by $2\,{\ell}$ forms (not necessarily different) with respect to a vertex and a direction. {\it The canonical form $C_{\ell}$ of an ${\ell}$--cycle} is called a form with a lexicographically maximal sequence of indices $i_0\ldots i_{\ell-1}$. For cycles of a form $C_{\ell}=b_a b_b\cdots b_a b_b$, where $\ell=2\,k$, and $b_a b_b$ appears $k$ times, we write $C_{\ell}=(b_a b_b)^k$. In particular, $BS_3\cong C_6$ with the following canonical form:

\begin{equation}\label{e1}
C_6=b_2b_1b_2b_1b_2b_1=(b_2\,b_1)^3.
\end{equation}

This representation admits the braid relation~(\ref{ebraid6}) for $n=3$. In the case when $n=4$, for $6$-cycles there are two canonical forms  $C_6=b_{i+1}b_ib_{i+1}b_ib_{i+1}b_i=(b_{i+1}\,b_i)^3$, where $i=1,2$, meeting the same braid relations~(\ref{ebraid6}). Moreover, for $4$-cycles there is the canonical form $C_4=b_3b_1b_3b_1 \\ =(b_3\,b_1)^2$ which admits the braid relation~(\ref{ebraid4}). It is evident from the Figure~\ref{fig.bs3_bs4}, (b), that the $4$- and $6$-cycles have no other canonical representation that is different from the braid relations in $BS_4$. The natural questions arising here is about existence of canonical forms of small cycles not admitting the braid relations in the Bubble-sort graph $BS_n$ for any $n\geqslant 4$.

In this paper we give complete combinatorial characterization of $4$- and $6$-cycles, which is presented in the following two theorems.

\begin{theorem} \label{th1}
Each of vertices of $BS_n, n\geqslant 4$, belongs to $(n-2)(n-3)/2$ distinct $4$--cycles of the following canonical form:
\begin{equation}\label{eq.c4}
C_4=(b_j\,b_i)^2, \quad 1\leqslant i \leqslant n-3,\, i+2\leqslant j \leqslant n-1.
\end{equation}
In total, there are $\frac{(n-2)(n-3)n!}{8}$ distinct $4$--cycles in $BS_n$.
\end{theorem}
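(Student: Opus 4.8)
The plan is to first establish the combinatorial fact that the forms in~(\ref{eq.c4}) are the \emph{only} $4$-cycles in $BS_n$, and then convert this into the two counts by elementary enumeration followed by double counting.

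For the characterization, I would start from the defining condition of a cycle form: a $4$-cycle through $\pi$ corresponds to a word $b_{i_0}b_{i_1}b_{i_2}b_{i_3}=e$ with $i_j\neq i_{j+1}$ (indices mod $4$). Rewriting this using the fact that each $b_i$ is an involution gives $b_{i_0}b_{i_1}=b_{i_3}b_{i_2}$, so the task reduces to deciding when two products of adjacent transpositions coincide. Here I would split into cases according to whether $|i_0-i_1|=1$ or $|i_0-i_1|\geq 2$: in the first case $b_{i_0}b_{i_1}$ is a $3$-cycle supported on three consecutive points, and one checks directly that no admissible choice of $(i_2,i_3)$ (respecting $i_1\neq i_2$ and $i_3\neq i_0$) reproduces the same $3$-cycle, while in the second case $b_{i_0}b_{i_1}$ is a product of two disjoint transpositions and matching the unordered pair of transpositions forces $(i_2,i_3)=(i_0,i_1)$. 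This yields exactly the relation $(b_jb_i)^2=e$ with $|i-j|\geq 2$, which is the braid relation~(\ref{ebraid4}); taking the lexicographically maximal index sequence then gives the canonical form~(\ref{eq.c4}) with $1\leqslant i<j-1\leqslant n-2$.

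For the per-vertex count, I would observe that, since $b_i$ and $b_j$ commute when $|i-j|\geq 2$, each unordered pair $\{i,j\}$ with $|i-j|\geq 2$ determines a single $4$-cycle $\pi,\,\pi b_i,\,\pi b_ib_j=\pi b_jb_i,\,\pi b_j$ through $\pi$, and distinct pairs give distinct cycles, since they determine distinct pairs of neighbours of $\pi$. Counting admissible pairs: from all $\binom{n-1}{2}$ pairs in $\{1,\ldots,n-1\}$ I subtract the $n-2$ adjacent pairs $\{k,k+1\}$, leaving $\binom{n-1}{2}-(n-2)=(n-2)(n-3)/2$, which is the first claim. Finally, for the global count I would use double counting over incident (vertex, cycle) pairs: there are $n!$ vertices, each lying on $(n-2)(n-3)/2$ of these $4$-cycles, and every $4$-cycle is incident to exactly its $4$ vertices, so the total equals $n!\cdot\frac{(n-2)(n-3)}{2}\big/4=\frac{(n-2)(n-3)\,n!}{8}$.

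I expect the main obstacle to be the characterization step, and specifically the careful case analysis showing that a length-$4$ relation with no immediate backtracking ($i_j\neq i_{j+1}$) cannot arise from overlapping transpositions (the $|i_0-i_1|=1$ case) and must therefore come from commuting disjoint transpositions. Once exhaustiveness of the form~(\ref{eq.c4}) is secured, the enumeration of admissible pairs and the double-counting argument are routine.
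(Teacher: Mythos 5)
Your proof is correct, and it reaches the classification by a genuinely more algebraic route than the paper. The paper fixes $\pi=[12\dots n]$, views a $4$-cycle as two non-intersecting paths of length two between antipodal vertices $\pi$ and $\tau$, and splits by the number of shifted elements: one shifted element means two dependent transpositions, where Proposition~\ref{prop.dependent_transpositions} forces the return path to coincide with the forward one (so no cycle), while two shifted elements means independent $b_i,b_j$, whose shuffle $b_j\,b_i$ is the unique non-intersecting return path. Your case split on $|i_0-i_1|$ is the exact counterpart of this dichotomy, but you resolve it by solving the word equation $b_{i_0}b_{i_1}=b_{i_3}b_{i_2}$ directly in $Sym_n$: in the overlapping case the product is a $3$-cycle on three consecutive points, whose only factorizations into adjacent transpositions are $b_ib_{i+1}$ and $b_{i+1}b_i$ (a support argument), and matching orientations forces $i_3=i_0$, violating the closure constraint; in the disjoint case uniqueness of the disjoint-cycle decomposition forces $(i_2,i_3)=(i_0,i_1)$, yielding exactly $(b_j\,b_i)^2$ as in \eqref{eq.c4}. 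Your version buys a self-contained argument that needs no auxiliary proposition and that makes the no-backtracking condition at the closing edge ($i_3\neq i_0$) explicit --- a point the paper's path formulation handles only implicitly; conversely, the paper's shifted-element framework is the one that scales, since it is reused wholesale in the proof of Theorem~\ref{th2} for $6$-cycles, where a direct word-equation analysis would be considerably heavier. The counting is identical in substance, with the welcome addition that you spell out both the pair count $\binom{n-1}{2}-(n-2)=(n-2)(n-3)/2$ and the double count ($n!$ vertices, $(n-2)(n-3)/2$ cycles per vertex, $4$ vertices per cycle), steps the paper asserts without detail.
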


Instead of explicitly writing two index conditions as in Equation~\eqref{eq.c4}, for the sake of simplicity we use the compound version, given as $1\leqslant i \leqslant j-2 \leqslant n-3$.

\begin{theorem} \label{th2}
Each of vertices of $BS_n$, $n\geqslant 3$, belongs to $(n-2)$ distinct $6$--cycles of the canonical form:
\begin{equation}\label{eq.c6_1}
C^1_6=(b_i\,b_{i+1})^3, \quad 1\leqslant i \leqslant n-2,\,\,n\geqslant 3;
\end{equation}
when $n\geqslant 5$, each vertex belongs to $3(n-2)(n-3)$ distinct $6$--cycles of the canonical form:
\begin{multline}\label{eq.c6_2}
C^2_6=b_j\,b_{i+1}\,b_i\,b_j\,b_i\,b_{i+1}, \quad 1\leqslant i \leqslant j-3 \leqslant n-4, \text{or } \\ 1\leqslant j \leqslant i-2 \leqslant n-4 ;
\end{multline}
when $n\geqslant 6$, each vertex belongs to $\frac{(n-3)(n-4)(n-5)}{2}$ distinct $6$--cycles of the canonical form:
\begin{equation}\label{eq.c6_3}
C^3_6=(b_k\,b_j\,b_i)^2, \quad 1\leqslant i \leqslant j-2 \leqslant k-4\leqslant n-5;
\end{equation}
\begin{equation}\label{eq.c6_4}
C^4_6=b_k\,b_j\,b_i\,b_k\,b_i\,b_j, \quad 1\leqslant i \leqslant j-2 \leqslant k-4\leqslant n-5;
\end{equation}
\begin{equation}\label{eq.c6_5}
C^5_6=b_k\,b_j\,b_k\,b_i\,b_j\,b_i, \quad 1\leqslant i \leqslant j-2 \leqslant k-4\leqslant n-5;
\end{equation}
\begin{equation}\label{eq.c6_6}
C^6_6=b_k\,b_i\,b_k\,b_j\,b_i\,b_j, \quad 1\leqslant i \leqslant j-2 \leqslant k-4\leqslant n-5.
\end{equation}
In total, there are $(2n^3-21n^2+80n-104)\frac{n!}{6}$ distinct $6$--cycles in $BS_n$.
\end{theorem}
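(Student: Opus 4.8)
The plan is to reduce the statement to a purely word-combinatorial problem and then exploit the commutation structure of the generators. A $6$-cycle through a vertex $\pi$ corresponds to a word $b_{i_0}b_{i_1}\cdots b_{i_5}=e$ with $i_j\neq i_{j+1}$ for all $j$ (indices read cyclically) and with the six intermediate vertices pairwise distinct; two words describe the same cycle exactly when they differ by a cyclic rotation or a reversal, so the canonical forms are the orbit representatives under the induced dihedral action of order $12$ (the $2\ell=12$ forms mentioned earlier). First I would record the two facts used throughout: the $b_i$ are involutions, and $b_i,b_j$ commute and act on disjoint pairs of positions whenever $|i-j|\ge 2$, whereas $b_i,b_{i+1}$ generate a copy of $Sym_3$ in which $b_ib_{i+1}$ has order $3$.

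The structural heart is a block decomposition. Let $I=\{i_0,\dots,i_5\}$ and partition $I$ into maximal runs of consecutive integers (blocks). Generators from different blocks commute and move disjoint points, so the product equals the identity if and only if the sub-word supported on each block is separately the identity; in particular each block's sub-word has even length, and since the lengths sum to $6$ the multiset of block-lengths is $\{6\}$, $\{4,2\}$ or $\{2,2,2\}$. The key lemma I must prove is that no block of size $\ge 3$ can carry an identity sub-word of length $\le 6$ without a consecutive repetition. I would argue inside the Coxeter group $\langle b_i,\dots,b_{i+r-1}\rangle$ by tracking the prefix lengths $\ell(b_{i_0}\cdots b_{i_{t-1}})$, which trace a Dyck path from $0$ to $0$: the path cannot return to $0$ strictly inside (that would split off an identity sub-word of length $2$, i.e. a forbidden repetition), so it is elevated and has peak $\le 3$; analysing the only two elevated shapes of length $6$ shows the word either repeats a letter or fails to involve all generators of a size-$\ge 3$ block (using that the reduced word $b_ib_{i+1}b_{i+2}$ admits no braid move, which forces the collision). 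Hence every block has size $1$ or $2$, and matching the three length-patterns to the block sizes yields exactly the three regimes: $\{6\}$ with one size-$2$ block gives the braid hexagon $C_6^1$; $\{4,2\}$ forces a size-$2$ block carrying a length-$4$ relator of $Sym_3$ together with a commuting isolated generator appearing twice, which is $C_6^2$; and $\{2,2,2\}$ forces three pairwise non-adjacent isolated generators each appearing twice, which produce $C_6^3$--$C_6^6$.

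It remains to enumerate, within each regime, the admissible cyclic words and to count them per vertex. For a fixed choice of the underlying generators I would list all interleavings subject to the no-consecutive-repetition condition, discard those that fail to multiply to the identity (in the $\{4,2\}$ regime this is exactly the requirement that the restricted $Sym_3$-sub-word be one of the relators $b_i^2b_{i+1}^2,\,b_ib_{i+1}^2b_i,\dots$ rather than the alternating $(b_ib_{i+1})^2$), and check whether any interleaving is degenerate in the sense that four cyclically consecutive letters already multiply to the identity, since such words revisit a vertex and are not genuine $6$-cycles. The surviving cyclic words, taken up to reversal, are the genuine $6$-cycles through $\pi$ for that generator choice; identifying their canonical forms and summing over the admissible index ranges displayed in (\ref{eq.c6_2})--(\ref{eq.c6_6}) gives the per-vertex counts, and multiplying by $n!$ and dividing by $6$ (the number of vertices on a $6$-cycle) gives the totals, whose sum should yield the stated coefficient $\tfrac{7n^3-72n^2+247n-280}{12}$.

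I expect two places to be the real obstacles. The first is the size-$\ge 3$ elimination: producing a clean, fully rigorous version of the Dyck-path/reduced-word argument (rather than an ad hoc case check in $Sym_4$ and $Sym_5$) is the crux of the characterization, since it is precisely what guarantees that the six listed forms are a complete list. The second is the bookkeeping in the final count: one must be careful about the symmetry of each form---its rotational period (as in $C_6^1=(b_ib_{i+1})^3$ and $C_6^3=(b_kb_jb_i)^2$) and whether its reversal coincides with a rotation---because the size of the dihedral orbit determines how many distinct cycles through $\pi$ a single canonical form represents. An error of a factor of two in an orbit size propagates directly into the per-vertex multiplicities and hence into the total, so I would verify each orbit size explicitly and cross-check the per-vertex sum against a small case such as $n=5$ or $n=6$ before asserting the closed-form totals.
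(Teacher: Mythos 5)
Your plan is correct in substance but follows a genuinely different route from the paper. The paper works geometrically: it fixes two antipodal vertices $\pi,\tau$ of the $6$-cycle, classifies the $\pitau$-path of length three by how many elements of $\pi$ are shifted (one, two, or three), and uses Proposition~\ref{prop.dependent_transpositions} plus a figure-driven enumeration of non-intersecting $\taupi$-paths and chords to extract the six forms. You instead treat a $6$-cycle as a cyclic word $b_{i_0}\cdots b_{i_5}=e$ with no cyclically consecutive repetition, decompose the support into maximal consecutive blocks (commuting, position-disjoint, hence each block sub-word is separately the identity and of even length, giving the regimes $\{6\},\{4,2\},\{2,2,2\}$), and kill blocks of size $\geqslant 3$ by a Coxeter-length/Dyck-path argument together with Tits' theorem on reduced words. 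This is a more systematic completeness argument than the paper's: the paper's Cases 2 and 3 rest on inspecting figures and asserting that the listed $\taupi$-paths are the only ones, whereas your peak analysis (elevated paths of shape $0123210$ or $0121210$, the middle element having two reduced expressions related by braid/commutation moves) certifies exhaustiveness algebraically and would extend to longer cycles; the paper's approach is more elementary and stays close to the permutation picture. Two precision points you should fix when writing it up. First, your key lemma must be stated with the \emph{cyclic} non-repetition condition: the word $b_{i+1}\,b_i\,b_{i+2}\,b_i\,b_{i+2}\,b_{i+1}$ is an identity word of length $6$ on the size-$3$ block $\{i,i+1,i+2\}$ with no two \emph{linearly} consecutive equal letters, and it is excluded only because its first and last letters coincide; your opening paragraph does read indices cyclically, but the lemma as phrased ("without a consecutive repetition") is false in the linear reading. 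Relatedly, in the $\{4,2\}$ regime the block sub-word may contain squares broken by interleaving (e.g.\ $b_ib_{i+1}b_{i+1}b_i$), so the Dyck hypothesis does not transfer to sub-words; there a direct support argument (any length-$4$ identity word has support at most $2$) closes the case. Second, your worry about degenerate interleavings is vacuous: in a closed word of length $6$ with cyclically distinct consecutive letters, vertices at cyclic distance $3$ differ by parity, and a coincidence at distance $2$ or $4$ would force a length-$2$ identity segment, i.e.\ a forbidden repetition — so all six vertices are automatically distinct. Your orbit-size bookkeeping (orbit of the canonical form under the dihedral action of order $12$, divided by $2$ for direction: $2/2=1$ for $C^1_6$, $6/2=3$ for $C^3_6$, $12/2=6$ for the rest) reproduces exactly the paper's per-vertex multiplicities, and the total $\frac{n!}{6}\bigl((n-2)+6(n-3)(n-4)+\tfrac{7}{2}(n-3)(n-4)(n-5)\bigr)$ matches the stated closed form.
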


Hamiltonian cycles are one of the main objects of study in graph theory, and without doubt they also have numerous applications in various fields of science. The question of existence is as well important as the proposal of different constructions of these cycles, which is related to finding new Gray codes in combinatorial graphs \cite{KM16} or various embeddings \cite{SNL2011}. In this paper, we propose a new algorithm for constructing the Hamiltonian cycle, which is based on a maximal cover of the graph with generalized prisms. For any graph $H$, the generalized prism, denoted as 2-$H$, is defined as the Cartesian product $H\times K_2$ \cite{PR91}. The \textit{maximal cover} of a graph $G=(V, E)$ by subgraphs $H\subset G$ is a vertex-disjoint embedding of a collection of $H_1,H_2,\dots, H_k$, where $H_i\cong H$ for each $i$, such that each vertex $v\in V$ belongs to some $H_i$. The Hamiltonian cycle based on a maximal cover $H_1,H_2,\dots, H_k$ is the cycle that is formed by finding appropriate Hamiltonian paths in each copy of $H_i$ and fastening them together by the edges between these subgraphs to form the closed Hamiltonian cycle in the original graph. Similar idea was successfully used to construct Hamiltonian cycles in vertex-transitive graphs (see e.g. \cite{KM16}, \cite{Alspach89}, \cite{Kutnar09}). Our result is presented in the following theorem.

\begin{theorem} \label{th3}
In graph $BS_n,\,n\geqslant 5$, there exists a Hamiltonian cycle based on the maximal cover by generalized prisms 2-$BS_{n-2}$.
\end{theorem}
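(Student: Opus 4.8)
The plan is to make the maximal cover explicit, identify the graph obtained by contracting each prism to a point, and then lift a Hamiltonian cycle from this contraction into $BS_n$. First I would partition $V(BS_n)$ according to the unordered pair $\{\pi_{n-1},\pi_n\}$ of trailing symbols. For a fixed pair $\{a,b\}$ the $2(n-2)!$ permutations with $\{\pi_{n-1},\pi_n\}=\{a,b\}$ split into two classes according to whether $(\pi_{n-1},\pi_n)=(a,b)$ or $(b,a)$; on each class the generators $b_1,\dots,b_{n-3}$ act only on the first $n-2$ positions and induce a copy of $BS_{n-2}$, while $b_{n-1}$ matches each permutation to the one with the same first $n-2$ entries in the other class. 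Hence the subgraph induced on this pair is exactly $BS_{n-2}\times K_2=2\text{-}BS_{n-2}$, and as $\{a,b\}$ ranges over all $\binom{n}{2}$ pairs these prisms are vertex-disjoint and exhaust $V(BS_n)$, yielding the required maximal cover.

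Next I would analyse the edges leaving a prism. The only remaining generator, $b_{n-2}$, swaps positions $n-2$ and $n-1$, so it alters exactly one trailing symbol: from the pair $\{a,b\}$ it reaches a pair sharing precisely one of $a,b$. Consequently, contracting each prism to a single vertex produces the triangular graph $T(n)=L(K_n)=J(n,2)$ on the $\binom{n}{2}$ pairs, in which $\{a,b\}$ and $\{c,d\}$ are adjacent iff $|\{a,b\}\cap\{c,d\}|=1$, and each such adjacency is realized by $(n-3)!$ parallel edges of $BS_n$. Since $T(n)$ is Hamiltonian for every $n\geqslant 3$, I would fix a Hamiltonian cycle $P_1P_2\cdots P_mP_1$, $m=\binom{n}{2}$, of the contracted graph and realize it inside $BS_n$.

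The heart of the argument is the lifting step. In each prism $P_k$ I would seek a Hamiltonian path whose endpoints are an entry vertex adjacent, through a chosen $b_{n-2}$-edge, to $P_{k-1}$ and an exit vertex adjacent to $P_{k+1}$; concatenating these paths with the connecting inter-prism edges produces a spanning closed walk, which is a single Hamiltonian cycle precisely when the endpoints are matched consistently around the whole quotient cycle. Because $2\text{-}BS_{n-2}$ is a balanced bipartite graph, every Hamiltonian path joins its two colour classes, and every $b_{n-2}$-edge (a single transposition) likewise joins opposite classes, so the parity constraints are automatically compatible; the only freedom to exploit is the choice of entry and exit vertices among the $(n-3)!$ available connectors. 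The enabling lemma I would establish is that $2\text{-}BS_{n-2}$ admits a Hamiltonian path between any prescribed pair of vertices lying in opposite colour classes (Hamiltonian laceability), which I expect to prove by induction on $n$ from the known Hamiltonicity of $BS_{n-2}$ and the product structure with $K_2$.

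The step I expect to be the main obstacle is exactly this laceability of $2\text{-}BS_{n-2}$ together with the global consistency of the endpoint assignment: one must simultaneously choose the connecting $b_{n-2}$-edges and the in-prism paths so that the exit vertex of $P_k$ and the entry vertex of $P_{k+1}$ are the two ends of one chosen edge for every $k$, without forcing any vertex to serve as two distinct endpoints. I would resolve this by processing the quotient cycle one prism at a time, at each step using the abundance of $(n-3)!$ parallel connectors to select a fresh entry vertex in the colour class opposite to the just-used exit vertex and then invoking laceability to route through the prism; the base case $n=5$, where every prism is the hexagonal prism $C_6\times K_2$ since $BS_3\cong C_6$, can be checked directly.
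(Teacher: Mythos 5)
Your proposal is correct and follows essentially the same route as the paper: the same cover by prisms $2\text{-}BS_{n-2}$ indexed by unordered trailing pairs, the same identification of the quotient with the Johnson graph $J(n,2)$, the same key lemma (Hamiltonian laceability of each prism, obtained from a Hamiltonian cycle of $BS_{n-2}$ and the $K_2$ product structure), and the same parity-driven lifting of a quotient Hamiltonian cycle. The only cosmetic difference is that the paper proves the laceability lemma by a direct stitching of two parallel copies of a Hamiltonian cycle of $BS_{n-2}$ rather than by induction on $n$, and the lemma's full strength (arbitrary opposite-parity endpoints) makes your endpoint-consistency concern resolve automatically.
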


Although we prove the existence of the cycle, the proof is constructive and the construction suggests there are multiple Hamiltonian cycles of the presented type.  

In the literature we could find three proposed Hamiltonian cycle constructions in the Bubble-sort graph. The famous Steinhaus-Johnson-Trotter algorithm \cite{J63,T62} produces a Hamiltonian cycle, which is merely based on the permutation structure, rather than the structure of the graph, and it is different from our algorithm. T. Manneville and V. Pilaud in~\cite{Manneville14} proposed a similar method of constructing a Hamiltonian cycle in a general family of $1$-skeletons of graph associahedra. However their method, in simple words, was based on the hierarchical structure, thus using $b_{n-1}$-edges to connect the chunks of a Hamiltonian cycle in the copies of $BS_{n-1}$. Another construction that also utilizes the fastening method was proposed in~\cite{El-Hashash09}. The authors used a lexicographical ordering of permutations in $BS_{n}$ to obtain subgraphs isomorphic to $BS_{n-1}$ and by connecting them through $4$-cycles also get a Hamiltonian cycle. All the proposed methods are different from our algorithm and we provide the detailed explanation using graph-theoretic language, making it accessible to a wider community. 

The rest of the paper is structured as follows. In the Section~\ref{sec.small_cycles} we present the proof of Theorems \ref{th1} and \ref{th2} and in the Section~\ref{sec.prisms} we present the details of the Hamiltonian cycle construction in the case of general $n$ and the proof of Theorem~\ref{th3}.

\section{Characterization of small cycles}\label{sec.small_cycles}
The general idea of the proofs is based on considering two antipodal vertices $\pi$ and $\tau$ of the $2\ell$--cycle, where $\ell =4,6$ and finding two non-intersecting $\ell$--paths between them (see Figure~\ref{fig.paths1}). For any positive integer $d\leqslant n-2$, the element $\pi_{i}$ of a permutation $\pi\in Sym_n$, where $1\leqslant i\leqslant n-d-1$, is called to be \textit{shifted} $d$ steps to the right from its original position, if $\tau_{i+d}=\pi_{i}$. This is achieved by applying the sequence of transpositions $\left( b_{i+1}\,b_{i+2}\,\dots\,b_{i+d}\right)$.

In case of each $\ell$, we subdivide the proof into cases, depending on the number of shifted elements of $\pi$ and the distance they are shifted along the $\pitau$-path. It is obvious that along the $\taupi$-path the same elements must be shifted back to the left to their original positions, and therefore we only consider shifting to the right. Since we consider cycles of small length, we evade looking into more complicated shifts along the paths.

\begin{figure}[t]
\centering
\includegraphics[scale=0.8]{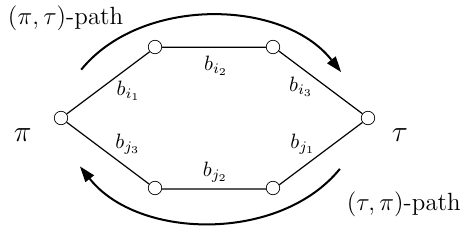}
\caption{Example of $\pitau$- and $\taupi$-paths in a 6--cycle.}
\label{fig.paths1}
\end{figure}

We call two transpositions $b_i$ and $b_j$ \textit{independent}, if $i\neq j-1,j+1$. Otherwise the transpositions are called \textit{dependent}. The main properties of such transpositions used in the proofs can be formulated as follows.
\begin{proposition}\label{prop.dependent_transpositions}
Let $\pi$ and $\tau$ be the vertices of the Bubble-sort graph $BS_n, n\geqslant 3$, and let the $\pitau$--path of length $d$, where $d\leqslant n-2$, is given by a sequence of successively dependent transpositions then there is no non-intersecting $\taupi$-path of length $d$.
\end{proposition}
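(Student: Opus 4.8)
The plan is to track the single element that the dependent $\pitau$-path displaces and to show that \emph{any} $\taupi$-path of the same length is forced to retrace the first path vertex by vertex. First I would fix notation in accordance with the shift setup above: a run of successively dependent transpositions realizing a rightward shift is the monotone sequence $b_i\,b_{i+1}\cdots b_{i+d-1}$ (the only successively dependent run of length $d$ that moves one element $d$ places to the right), which carries the element $x:=\pi_i$ from position $i$ to position $i+d$ while pushing each of the $d$ elements $y_1,\dots,y_d$ originally occupying positions $i+1,\dots,i+d$ one step to the left. Writing $v_0=\pi,\,v_1,\dots,v_d=\tau$ for the vertices of the $\pitau$-path, the intermediate vertex $v_t$ has $x$ in position $i+t$ with the block $y_1\dots y_d$ otherwise kept in order; the bound $d\le n-2$ guarantees that all these positions lie inside $\{1,\dots,n\}$.

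Next I would analyze an arbitrary $\taupi$-path $w_0=\tau,\,w_1,\dots,w_d=\pi$ of length $d$. The key observation is a displacement bound: each transposition $b_k$ only exchanges the contents of positions $k$ and $k+1$, so it changes the position of $x$ by at most one, and only when $x$ currently sits in position $k$ or $k+1$. Since $x$ occupies position $i+d$ in $\tau$ and must return to position $i$ in $\pi$, it has to move left by exactly $d$, and there are only $d$ steps available. Hence every step of the $\taupi$-path must move $x$ one position to the left. This pins down each transposition uniquely: when $x$ is in position $p$, the only move that shifts it leftward is $b_{p-1}$, so the $\taupi$-path is forced to be $b_{i+d-1}\,b_{i+d-2}\cdots b_i$, the exact reversal of the $\pitau$-path.

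Finally I would observe that this forced return path revisits precisely the vertices of the first path. Swapping $x$ leftward past the in-order block $y_1\dots y_d$ reproduces the arrangements of the forward shift read backwards, i.e. $w_t=v_{d-t}$ for every $t$; in particular $w_1=v_{d-1}$ already lies on the $\pitau$-path. Thus the two length-$d$ paths share all their interior vertices and cannot be non-intersecting, which is exactly the assertion of the proposition.

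I expect the main obstacle to be the rigidity step rather than any computation: one must exclude every alternative $\taupi$-path, not merely the naive reversal. The displacement counting argument is what accomplishes this, since over $d$ steps no transposition may leave $x$ fixed or move it rightward without making the required leftward shift of $d$ unattainable. Equivalently, in the language of the Coxeter presentation, the shift permutation $\sigma=b_i\cdots b_{i+d-1}$ is a cyclic block rotation of length $d$ whose inverse admits a unique reduced word, so the only factorization of $\sigma^{-1}$ into $d$ adjacent transpositions is the reversed run, and the corresponding path coincides vertexwise with the given one.
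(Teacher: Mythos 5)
Your proof is correct and follows essentially the same route as the paper's: interpret the successively dependent sequence as the monotone run $b_i\,b_{i+1}\cdots b_{i+d-1}$ shifting a single element, and conclude that any $\taupi$--path of length $d$ is forced to be the exact reversal, hence coincides vertexwise with the forward path. Your displacement-counting argument merely makes explicit the rigidity step that the paper's one-line proof asserts with ``inevitably,'' so your write-up is, if anything, more complete than the original.
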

\proof For any $j$, where $1\leqslant j\leqslant n-d$, the sequence of transpositions $(b_{j}\,b_{j+1}\dots\, b_{j+d-1})$ acts on $\pi$ as shifting of the element $j$ to position $j+d$. Since this is the only moved element, the backward $\taupi$-path of the same length inevitably has all the transpositions in reverse order. Hence, the path is unique. \qed

\subsection{Proof of Theorem~\ref{th1}}
We present now the complete description of $4$-cycles.
\begin{proof}
Since the graph is vertex-transitive, let $\pi=[12\dots n]$. For any two antipodal vertices $\pi$ and $\tau$ of a $4$-cycle, the length of the $\pitau$--path equals two. We prove the statement by considering all possible cases of the shifted elements of $\pi$ along the $\pitau$--path and describing the set of non-intersecting $\taupi$--paths. Obviously, the number of shifted elements should not be greater than two.

Shift of the one element is equivalent to having only two dependent transpositions on the $\pitau$--path and by Proposition~\ref{prop.dependent_transpositions} such a path is unique. If two elements are shifted, then $\pitau$--path consists of two independent transpositions $b_i\,b_j$, where $1\leqslant i \leqslant n-3$ and $i+3\leqslant j \leqslant n-1$. Since transpositions are independent, shuffling those will result in another path, which is a non-intersecting $\taupi$--path defined as $b_j\,b_i$.

\begin{figure*}[t]
\centering
\includegraphics[width = 0.8\textwidth]{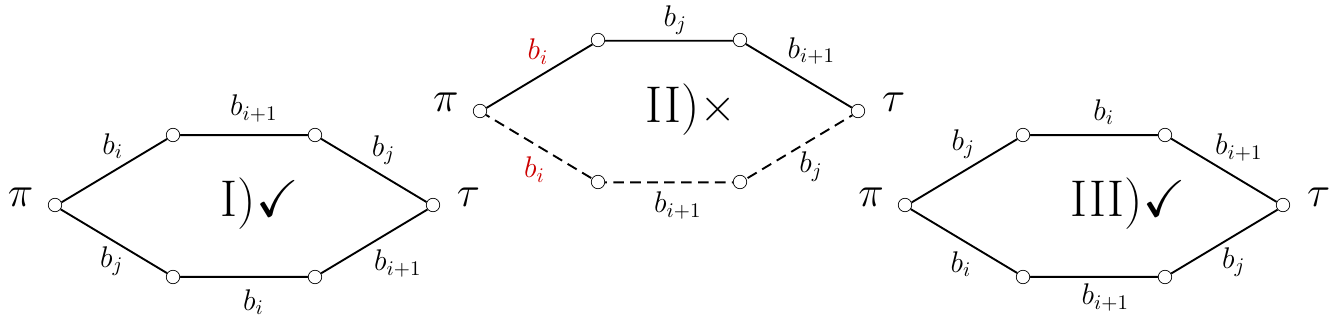}
\caption{Illustration for the order of two shifted elements. Orders I) and III) produce the same form; cycle in order II) is impossible.}
\label{fig.pathsC6twoshifted}
\end{figure*}

Altogether the canonical form of the only possible 4-cycles is given in \eqref{eq.c4}. It is straightforward to obtain the number of distinct cycles given by this family of forms. The number of possible pairs of indices is $(n-2)(n-3)/2$ and each form describes one distinct 4-cycle passing through a given vertex, thus in total there are $(n-2)(n-3)/2$ distinct 4-cycles passing through a given vertex and the total number of distinct $4$-cycles in $BS_n$ is $\frac{(n-2)(n-3)n!}{8}$. This completes the proof.
\end{proof}

\subsection{Proof of Theorem~\ref{th2}}
We present now the complete description of $6$-cycles.
\begin{proof}
Consider two antipodal vertices $\pi=[12\dots n]$ and $\tau$ of a $6$-cycle. The length of the $\pitau$--path is equal to three. We prove the statement by considering all possible cases of the shifted elements of $\pi$ along the $\pitau$--path and describing the set of non-intersecting $\taupi$--paths. The number of such elements should not be greater than three. This turns to considering the following cases.\vspace{2mm}

\textbf{Case 1: one shifted element.} Suppose the element $i$ is shifted to the position $i+3$. In order to do so, the sequence $b_{i}\,b_{i+1}\,b_{i+2}$ should be applied to $\pi$, which consists of successively dependent transpositions and by Proposition~\ref{prop.dependent_transpositions} such $\pitau$--path is unique. Hence, no 6-cycle is possible in this case.\vspace{2mm}

\textbf{Case 2: two shifted elements.} Suppose the element $i$ is shifted to the position $i+2$ and another element $j$ is shifted to $j+1$. In order to do so, the sequence $b_{i}\,b_{i+1}$ and the transposition $b_j$ should be applied to $\pi$. There are three possible orders of application (see Figure~\ref{fig.pathsC6twoshifted}):
\begin{equation}
\text{I)}\,\, b_{i}\,b_{i+1}\,b_j; \quad \text{II)}\,\, b_{i}\,b_j\,b_{i+1}; \quad \text{III)}\,\, b_j\,b_{i}\,b_{i+1}.
\end{equation}
There are two possibilities: either $b_j$ is independent from $b_{i}$ and $b_{i+1}$ or not.

If $b_j$ is independent from $b_{i}$ and $b_{i+1}$, then in order to shift the element $i$ back to the initial position, we need to apply the sequence $b_{i}\,b_{i+1}$ in inverse order on $\taupi$--path. By analyzing the possible ways of this operation (see Figure~\ref{fig.pathsC6twoshifted}), we find the only possible cycle form $b_j\,b_{i}\,b_{i+1}\,b_j\,b_{i+1}\,b_i$, which corresponds to \eqref{eq.c6_2} from the statement. Since $b_j$ is independent from $b_{i}$ and $b_{i+1}$, then either $i+3 \leqslant j$ or $j\leqslant i-2$. In the first case we have $1\leqslant i \leqslant n - 4 $ and $i+3 \leqslant j \leqslant n-1$. Taking the double sum over both these conditions we obtain $(n-3)(n-4)/2$. In the second case we have $1\leqslant j \leqslant n - 3$ and $j+2 \leqslant i \leqslant n-1$, which gives the another $(n-3)(n-4)/2$ forms. In total, this leaves us with exactly $(n-2)(n-3)$ distinct forms given by \eqref{eq.c6_2} and due to symmetries each form describes three distinct cycles passing through a given vertex, in total giving $3(n-2)(n-3)$ distinct cycles.

If $b_j$ is dependent on $b_{i}$ or $b_{i+1}$, then either $j=i$, $j=i+1$ or $j=i-1$.  Along the $\taupi$--path we must apply the sequence $b_{i}\,b_{i+1}$ in the inverse order, thus for each choice I), II) and III) of $\pitau$--path there exists a unique description of possible $\taupi$--path. In case $\pitau$--path is given by I), then for $j=i$ the non-intersecting $\taupi$--path is given as $b_{i+1}\,b_{i}\,b_{i+1}$, whereas $j=i-1$ and $j=i+1$ are impossible. In case $\pitau$--path is given by III), then for $j=i+1$ the non-intersecting $\taupi$--path is given as $b_{i}\,b_{i+1}\,b_{i}$, whereas $j=i-1$ and $j=i$ are impossible. The case of $\pitau$--path given by II) is also impossible. Therefore, we have the only form of a 6-cycle given by $b_{i+1}\,b_{i}\,b_{i+1}\,b_{i}\,b_{i+1}\,b_{i}$, which corresponds to~\eqref{eq.c6_1} in the statement. The number of distinct forms described by~\eqref{eq.c6_1} is $n-2$, and each form describes one cycle passing through a given vertex, in total giving $(n-2)$ distinct cycles.

\begin{figure}[b]
\centering
\includegraphics[scale=0.42]{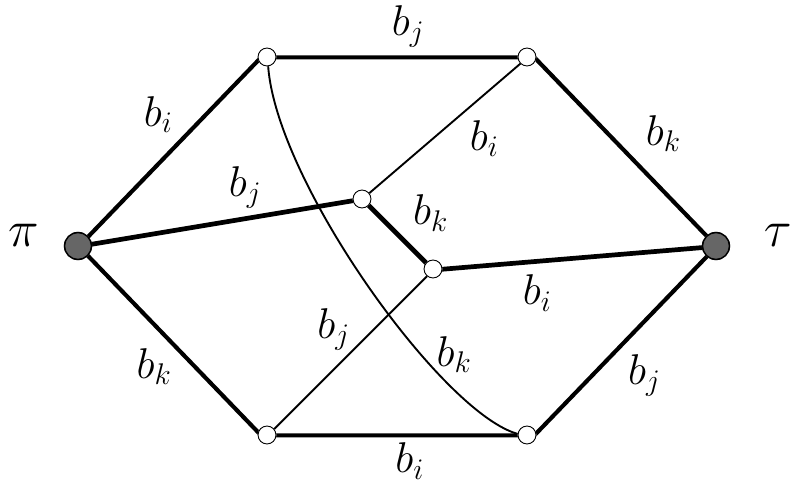}
\caption{Case of three shifted elements: three possible non-intersecting $\pitau$--paths with chords between them produce a graph isomorphic to a 3-cube.}
\label{fig.pathsC6threeshifted}
\end{figure}

\textbf{Case 3: three shifted elements.} Suppose the element $i$ is shifted to the position $i+1$ by $b_i$. Then we have two other elements $j$ and $k$ shifted to the positions $j+1$ and $k+1$ by independent transpositions $b_j$ and $b_k$ correspondingly. Suppose the $\pitau$-path is given by $b_i\,b_j\,b_k$. The sole restriction on the non-intersecting $\taupi$--path is that the two incident edges to $\pi$ and $\tau$ should be different. Therefore, we have two non-intersecting $\taupi$-paths: 1) $b_j\,b_k\,b_i$ and 2) $b_k\,b_i\,b_j$. Further, we note there are chords between pairs of paths $(b_i\,b_j\,b_k,b_j\,b_k\,b_i)$, $(b_j\,b_k\,b_i,b_k\,b_i\,b_j)$ and $(b_i\,b_j\,b_k,b_k\,b_i\,b_j)$ (see Figure~\ref{fig.pathsC6threeshifted}). Thus, combining all the non-intersecting pairs of paths, we obtain four distinct canonical forms of cycles described by~\eqref{eq.c6_3}-\eqref{eq.c6_6} in the statement. We note that these $6$-cycles appear only when $n\geqslant 6$.

\begin{figure*}[t]
\centering
\includegraphics[width=0.54\textwidth]{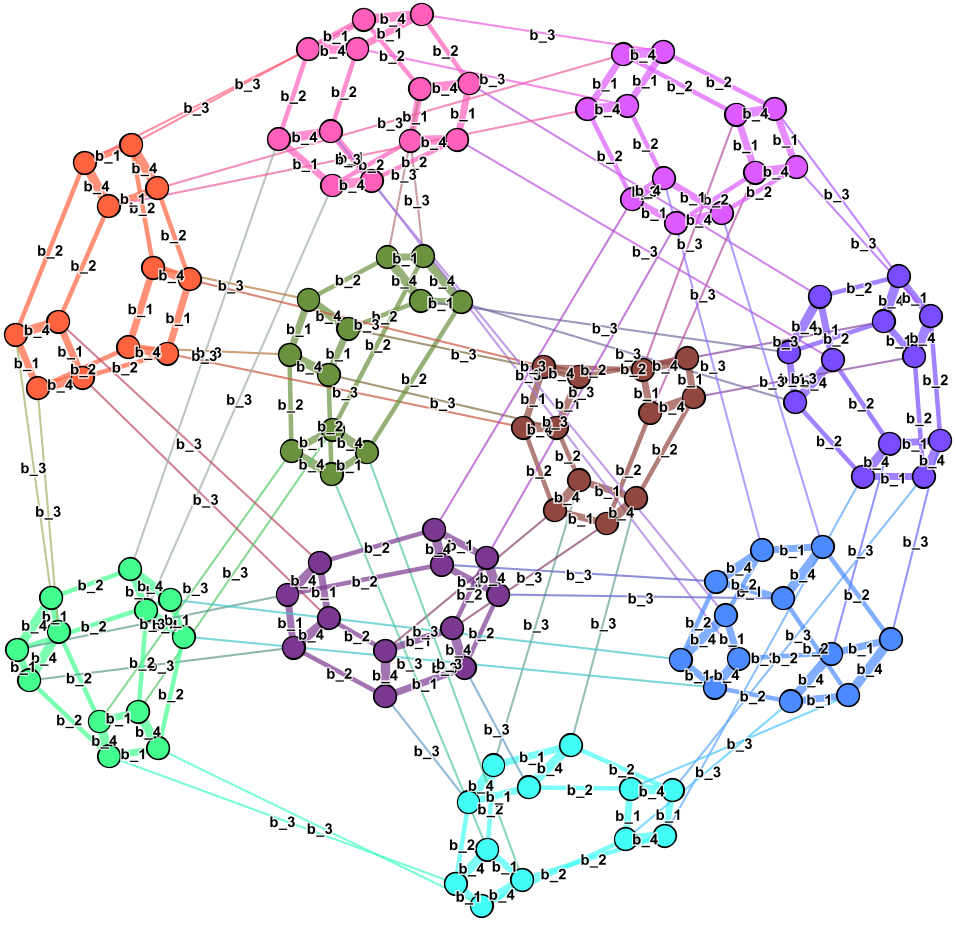}
\caption{The graph $BS_5$ with apparent $6$--prism structure marked with different colors.}
\label{fig.BS5_6-prisms}
\end{figure*}

One may see from the Figure~\ref{fig.pathsC6threeshifted} that paths form a subgraph of a 3-cube. Therefore any permutation of indices $i,j,k$ leaves us inside the same subgraph and in order to calculate the number of distinct forms $N(6)$ given by each of \eqref{eq.c6_3}-\eqref{eq.c6_6} it is only necessary to calculate the number of possible ordered triples $(i,j,k)$, such that $i \leqslant j-2 \leqslant k-4$. We note that for $n=6$ there is only one way to select three independent transpositions, thus $N(6)=1$. Increasing $n$ by one produces extra triples only by fixing the $k=n-1$, which is $\sum\limits_{j=3}^{n-3}\sum\limits_{i=1}^{j-2}1 = (n-4)(n-5)/2$, hence the following relation holds for $n\geqslant 7$:
$$N(n) = N(n-1) + \frac{(n-4)(n-5)}{2}.$$
From this relation, we immediately have the number $N(n) \\ =\frac{(n-3)(n-4)(n-5)}{6}$. Due to symmetries, each of the four forms~\eqref{eq.c6_3}-\eqref{eq.c6_6} describes three distinct cycles passing through a given vertex, all in total giving a contribution of $2(n-3)(n-4)(n-5)$ distinct cycles.

In total, the graph $BS_n$ contains 
\begin{multline}
\left((n-2)+3(n-2)(n-3)+2(n-3)(n-4)(n-5)\right)\frac{n!}{6} \\ = (2n^3-21n^2+80n-104)\frac{n!}{6}
\end{multline} 
cycles of length 6. This completes the proof.
\end{proof}

\section{Hamiltonian cycles in $BS_n$ based on generalised prisms}\label{sec.prisms}
In this section we give a description of a Hamiltonian cycle in the Bubble-sort graph $BS_n$ based on graphical prisms. We first show the existence of such cycles in the graph $BS_5$. Then the algorithm is further extended to the general case $n\geqslant 5$. We start the section with a few definitions and a technical lemma.

The parity of a permutation $\pi\in Sym_n$ may be defined as the parity of the number of transpositions in the canonical representation of $\pi$. Let us remind, that the Bubble-sort graph is bipartite, since each multiplication by a generating transposition changes parity of a permutation. \textit{The graph of an $n$--prism}, $n\geqslant 3$, is called the graph of a Cartesian product $C_n \times K_2$ of the cycle of length $n$ and the complete graph on two vertices. A graph is called \textit{Hamilton-connected} if any two vertices of a graph are connected by a Hamiltonian path. The following proposition is well known for prisms (see e.g.~\cite{Paulraja93}).

\begin{proposition}\label{prop.ham-connect-general_prism}
The graph of $n$--prism is Hamilton-con\-nected if and only if $n$ is odd.
\end{proposition}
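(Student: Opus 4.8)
The plan is to prove the two implications separately, treating necessity ($n$ even $\Rightarrow$ not Hamilton-connected) as the short direction and sufficiency ($n$ odd $\Rightarrow$ Hamilton-connected) as the substantive one. For necessity I would observe that when $n$ is even the cycle $C_n$ is bipartite, hence so is the prism $C_n\times K_2$, with colour classes $A=\{u_i:i\text{ odd}\}\cup\{v_i:i\text{ even}\}$ and $B=\{u_i:i\text{ even}\}\cup\{v_i:i\text{ odd}\}$, each of size $n$. Any Hamiltonian path visits all $2n$ vertices along $2n-1$ edges and, being a path in a bipartite graph, alternates between $A$ and $B$; an odd number of edges forces its two endpoints into different classes. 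Consequently no Hamiltonian path can join two vertices lying in the same class (for instance $u_1$ and $u_3$), so the even prism is not Hamilton-connected, which is exactly the contrapositive of the ``only if'' direction.

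For sufficiency, fix $n$ odd and exploit the automorphisms of the prism. The rotations $u_i\mapsto u_{i+1},\ v_i\mapsto v_{i+1}$, the reflection $u_i\mapsto u_{2-i},\ v_i\mapsto v_{2-i}$ (indices mod $n$), and the rail-swap $u_i\leftrightarrow v_i$ generate a vertex-transitive group. Given an arbitrary pair of vertices, I would first move one of them to $u_1$ by a rotation composed, if needed, with the swap, reducing to two families: an \emph{intra-rail} target $u_k$ and a \emph{cross-rail} target $v_k$. The reflection fixing $u_1$ maps $u_k\mapsto u_{2-k}$ and $v_k\mapsto v_{2-k}$, so it further restricts $k$ to roughly half its range. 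It then suffices to exhibit, for each admissible $k$, one Hamiltonian path from $u_1$ to $u_k$ and one from $u_1$ to $v_k$.

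These paths I would build as \emph{snakes} that run along one rail, cross a rung, run back along the other rail, and use the wrap-around edges $u_nu_1$ and $v_nv_1$ to splice segments. The easy endpoints are immediate and parity-independent: $u_1,u_2,\dots,u_n,v_n,\dots,v_1$ joins $u_1$ to $v_1$, and $u_1,v_1,v_2,\dots,v_n,u_n,\dots,u_2$ joins $u_1$ to $u_2$. The generic targets require a genuine zigzag through the rungs; for example, for $n=5$ the sequence $u_1,u_2,v_2,v_1,v_5,u_5,u_4,v_4,v_3,u_3$ realizes $u_1\to u_3$. It is precisely here that oddness enters: when the rungs are traversed in a zigzag, an odd number of them leaves a single unmatched rung whose two endpoints can be absorbed by a wrap-around edge, whereas an even $n$ would instead reproduce the bipartite parity obstruction used for necessity.

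I expect the main obstacle to be a uniform description of the zigzag for the generic intra-rail pair $u_1\to u_k$ and cross-rail pair $u_1\to v_k$, together with a proof that the snake visits every vertex exactly once for every admissible $k$. The natural way to organize this is to split according to the parity of $k$, so that the zigzag arrives at the correct rail at the correct index, and to verify in each sub-case that the unmatched rung is routed through a wrap-around edge. Checking that these finitely many construction templates cover all $k$ and both residues, and that each is genuinely a path (no repeated vertex, all consecutive pairs adjacent), is the book-keeping core of the argument; the conceptual content lies entirely in the fact that an odd cycle removes the bipartite parity barrier.
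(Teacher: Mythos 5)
You should know at the outset that the paper itself offers no proof of this proposition: it is stated as ``well known'' with a pointer to \cite{Paulraja93}, so the only question is whether your argument stands on its own. Your ``only if'' direction does: for even $n$ the prism $C_n\times K_2$ is bipartite with colour classes of equal size $n$, a Hamiltonian path has $2n-1$ edges and hence endpoints in opposite classes, so a same-class pair such as $u_1,u_3$ admits no Hamiltonian path. That half is complete and correct, and the symmetry reduction you set up for the converse (rotations, the rail swap, and the reflection fixing $u_1$, cutting the targets down to $u_k$ and $v_k$ with $k$ in half its range) is also legitimate.

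The genuine gap is in the ``if'' direction, which as written is a programme rather than a proof. After the reduction, the claim requires a Hamiltonian path from $u_1$ to \emph{every} admissible $u_k$ and $v_k$ for \emph{every} odd $n$, and you exhibit only three concrete paths: $u_1\to v_1$, $u_1\to u_2$, and $u_1\to u_3$ in the case $n=5$. The general zigzag templates, the case split on the parity of $k$, and the check that each template is a genuine path through all $2n$ vertices are exactly what you defer as ``the book-keeping core'' --- but that book-keeping \emph{is} the mathematical content of the direction; the three samples do not determine the general construction, and a reader could not reconstruct it from your text. The plan is sound and can be completed, for instance by writing explicit index templates (a zigzag through rungs $k,k+1,\dots$ closed off by the wrap edges $u_nu_1$ and $v_nv_1$, with the parity of $k$ deciding which rail the snake ends on), or by an induction on odd $n$ in steps of two, splicing the two new rungs into a rail edge used by an existing path --- though the latter needs care about where such rail edges occur. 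Until one of these is actually carried out and verified, the sufficiency half, which is the only nontrivial half, remains unproven; your correct observation that oddness ``removes the bipartite parity barrier'' shows the obstruction vanishes, but absence of an obstruction is not a construction.
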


\begin{figure*}[t]
\centering
\includegraphics[width=0.8\textwidth]{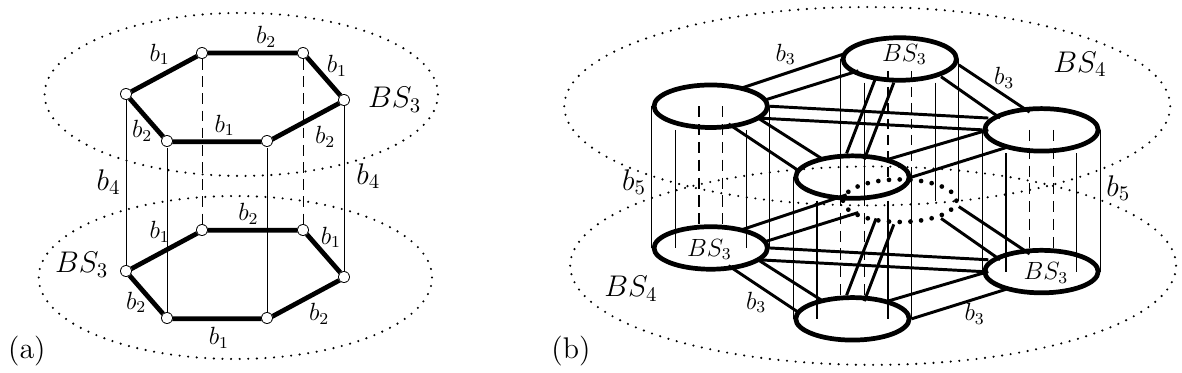}
\caption{a) Example of a $6$-prism (or $\GP{5}$); b) Schematic example of a generalised prism $\GP{6}$.}
\label{fig.prisms_examples}
\end{figure*}

The Hamiltonian cycle is constructed using generalized prisms. Remind that for any graph $G$ the \textit{generalised prism} 2-$G$ is defined as the Cartesian product $G\times K_2$. Let the graph $G$ have a maximal vertex cover by subgraphs, isomorphic to a subgraph $H\subset G$. Then, the \textit{factor graph} $\faktor{G}{H}=(V,E)$ is the graph whose nodes $V$ represent covering subgraphs and there is an edge $(V_1, V_2)\in E$ if in the original graph $G$ there is an edge between some nodes $u\in V_1$ and $v \in V_2$.

In the graph $BS_n$, where $n\geqslant 5$, we observe the presence of generalized prisms 2-$BS_{n-2}$, whose vertex set is given by
$$\left\{[\pi_1 \dots \pi_{n-2}\,i\,j] \cup [\pi_1 \dots \pi_{n-2}\,j\,i]:\,\pi_k\in \{1,\dots,n\}\backslash \{i,j\}\right\},$$
for fixed $i\neq j\in\{1,\dots,n\}$ and the edge set includes edges, that correspond to transpositions $b_1,\dots, b_{n-3}, b_{n-1}$. Further, note that $b_{n-1}$-edges only connect vertices of two copies of $BS_{n-2}$, since they permute only the last two elements of a permutation. Let us denote further for simplicity the 2-$BS_{n-2}$ as $\GP{n}$. On the Figure \ref{fig.prisms_examples} we show an example of the $\GP{6}$. By hierarchical structure, it is straightforward to show that there exists a maximal cover of $BS_{n}$ by $\GP{n}$, therefore there exists the factor graph $\Gamma_n = \faktor{BS_n}{\GP{n}}$. 

Let us describe the structure of the factor graph $\Gamma_n$. Since, the vertex set of any $\GP{n}$ contains all vertices with the last two elements fixed, then vertices of $\Gamma_n$ can be encoded by unordered pairs of elements $\{i,j\}$ from $\{1,2,\dots, n\}$ and a specific prism on vertices with last elements $i,\,j$ we denote as $\GP{n}(i,j)$. Following the connectivity between copies of $\GP{n}$, which is given by the edges corresponding to transposition $b_{n-2}$, two prisms $\GP{n}(i_1, j_1)$ and $\GP{n}(i_2, j_2)$ are connected iff $\{i_1, j_1\} \cap \{i_2, j_2\} = 1$. Hence $\Gamma_n$ is isomorphic to the \textit{Johnson graph} $J(n,2)$, which is proven to be Hamiltonian and even Hamilton-connected \cite{Alspach12}. For the sake of visual presentation we refer to Figure~\ref{fig.BS5_6-prisms} with an example of the covering of the graph $BS_5$ by prisms $\GP{5}(2,4)$

Furthermore, we note that any two factor vertices of $\GP{n}$ are connected through $\GP{n-1}$, meaning that for any appropriate $k$ all vertices of one $\GP{n-1}(k,i,j)\subset \GP{n}(i,j)$ on the vertices of type $[\dots k\,i\,j]$ with fixed last elements $k, i, j$ are connected to the vertices of $\GP{n-1}(i, k, j)\subset \GP{n}(k,j)$ on the vertices of type $[\dots i\,k\,j]$ with fixed last elements $i,k,j$.

\subsection{Proof of Theorem \ref{th3}}
\begin{proof}
We use the lifting method to construct the Hamiltonian cycle. We showed above that the factor graph is nice enough, thus we need to prove a certain statement about the connectivity of $\GP{n}$.

\begin{proposition}\label{prop.2-bsn-2_ham_path}
In the graph of the $\GP{n}\subset BS_n$, where $n\geqslant 5$, for a given vertex $\pi$ and any vertex $\tau$, such that the parity $\omega(\pi)\neq \omega(\tau)$, there is a Hamiltonian path with end vertices $\pi$ and $\tau$.
\end{proposition}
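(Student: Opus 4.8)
The statement is precisely the assertion that $\GP{n}=BS_{n-2}\times K_2$ is \emph{Hamilton-laceable}. Indeed, $\GP{n}$ is a balanced bipartite graph on $2\,(n-2)!$ vertices whose bipartition is given by the parity $\omega$, so $\omega(\pi)\neq\omega(\tau)$ is the obvious necessary condition for a Hamiltonian path, and the claim is that it is also sufficient. The plan is to exploit the two-layer prism structure directly: write $\GP{n}$ as two disjoint layers $L_0,L_1\cong BS_{n-2}$ joined by the perfect matching of $b_{n-1}$-edges, and set $\pi=(p,\varepsilon)$, $\tau=(q,\delta)$ with $p,q\in BS_{n-2}$ and $\varepsilon,\delta\in\{0,1\}$.

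The heart of the argument is that Hamilton-laceability of the \emph{base} graph $BS_{n-2}$ already forces laceability of the prism, via two parity cases. If $\varepsilon=\delta$ (same layer), then $p,q$ have opposite parity in $BS_{n-2}$; take a Hamiltonian path $P$ of that layer from $p$ to $q$ and, at one edge $(u,u')$ of $P$, splice in a \emph{detour} across the matching, replacing $(u,u')$ by $(u,\varepsilon)\to(u,1-\varepsilon)\to\cdots\to(u',1-\varepsilon)\to(u',\varepsilon)$, where the middle segment is a Hamiltonian path of the opposite layer from $u$ to $u'$. Such a path exists because $u\sim u'$ forces $u,u'$ to have opposite parity, so laceability of $BS_{n-2}$ applies. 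If $\varepsilon\neq\delta$ (different layers), then $p,q$ have equal parity; choose a neighbour $r$ of $q$ with $r\neq p$ (possible since $BS_{n-2}$ is $(n-3)$-regular with $n-3\geq 2$), take a Hamiltonian path of $L_\varepsilon$ from $p$ to $r$, \emph{cross} the matching edge $(r,\varepsilon)\to(r,\delta)$, and finish with a Hamiltonian path of $L_\delta$ from $r$ to $q$; both exist because $r$ is adjacent to, hence opposite in parity to, both $p$ and $q$. In each case the two layer-paths are vertex-disjoint and jointly exhaust $\GP{n}$.

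This reduces the whole proposition to Hamilton-laceability of $BS_{n-2}$. For $n\geqslant 6$ we have $n-2\geqslant 4$, and I would establish laceability of $BS_m$ for $m\geqslant 4$ by its own hierarchical induction along the decomposition of $BS_m$ into $m$ copies of $BS_{m-1}$ (whose factor graph is the Hamilton-connected complete graph $K_m$), lifting a Hamiltonian path through the copies exactly as in the parity bookkeeping of Theorem~\ref{thm.ham_cycle_BS5}; the base case $BS_4$ (a $3$-regular bipartite graph on $24$ vertices) is verified directly. Note that the reduction \emph{cannot} be pushed down to $n=5$, because $BS_3\cong C_6$ is not laceable (a $6$-cycle carries Hamiltonian paths only between adjacent vertices); instead $\GP{5}$ is handled as a genuine base case, its laceability being exhibited exhaustively by Table~\ref{tab.ham_paths_6_prism} together with the vertex-transitivity of the $6$-prism.

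The main obstacle is the subcase, arising inside the induction for $BS_m$, where the two prescribed endpoints $\pi,\tau$ lie in the \emph{same} copy $BS_{m-1}(a)$. Routing a Hamiltonian path that begins at $\pi$, tours all the remaining copies, and returns to finish at $\tau$ forces $BS_{m-1}(a)$ to be traversed as two vertex-disjoint subpaths with four prescribed endpoints: $\pi$, $\tau$, and the two exit/entry vertices carrying external edges. This is a \emph{two-path Hamiltonian cover} with prescribed endpoints, a property strictly stronger than plain laceability, so the naive induction hypothesis does not supply it. I expect to overcome this by strengthening the induction hypothesis to include precisely such a two-path cover statement, selecting the two free endpoints among the abundant external-edge vertices of the required parity; the delicate point, to be checked in every case, is that the parities of the successive exit/entry vertices close up correctly around the $K_m$-tour, again by the same parity argument used in Theorem~\ref{thm.ham_cycle_BS5}.
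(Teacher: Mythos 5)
Your reduction of the proposition to Hamilton-laceability of the base graph is correct as far as it goes: the same-layer splice across a matching edge, the opposite-layer cross at a neighbour $r$ of $q$, and the parity bookkeeping in both cases are all sound. But the proposal has a genuine gap at its load-bearing point: Hamilton-laceability of $BS_m$ for $m\geqslant 4$ is never actually proved. You sketch an induction over the hierarchical decomposition into $m$ copies of $BS_{m-1}$ and then yourself identify the fatal subcase --- both prescribed endpoints lying in the same copy $BS_{m-1}(a)$, which forces that copy to be spanned by \emph{two} vertex-disjoint paths with four prescribed endpoints. That two-path cover property is strictly stronger than laceability, so the naive induction hypothesis does not supply it, and your plan to ``strengthen the induction hypothesis'' is left entirely unexecuted: you neither state the strengthened hypothesis precisely nor verify that it propagates and that the parities of the exit/entry vertices close up. This is exactly where such inductions become hard (the known proofs of laceability of bubble-sort graphs in the literature require precisely this kind of strengthening and a substantial case analysis), so as written the argument is incomplete at its only difficult step; everything fully proved in your text is the comparatively easy transfer from base to prism.

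The gap is also avoidable, and this is where the paper's proof differs instructively. The paper never invokes laceability of $BS_{n-2}$: it fixes two \emph{matched} Hamiltonian cycles $H^1_{n-2}$ and $H^2_{n-2}$ in the two layers (if $(\pi_1,\pi_2)\in H^1_{n-2}$ then $(\pi_1\,b_{n-1},\pi_2\,b_{n-1})\in H^2_{n-2}$), so that these cycles together with the $b_{n-1}$-matching form a spanning even prism $C_{(n-2)!}\times K_2$ inside $\GP{n}$, and then joins any two opposite-parity vertices by explicit zigzag constructions within that spanning prism (the paper's Cases 1 and 2). Mere Hamiltonicity of $BS_{n-2}$, already available for all $n-2\geqslant 3$, suffices; in particular $n=5$ needs no separate base case, since $BS_3\cong C_6$ is Hamiltonian even though, as you correctly observe, it is not laceable. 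Replacing your unproved laceability lemma by this matched-cycles device would close the gap and at the same time make your two-case reduction redundant.
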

\begin{proof}
By definition, the prism $\GP{n}$ is represented as two copies of $BS^1_{n-2}$ and $BS^2_{n-2}$, put one above the other and connected by $b_{n-1}$-edges between nodes of respective copies. Since the graph $BS_{n}$ is Hamiltonian for any $n\geqslant 3$, let us fix some Hamiltonian cycles $H^1_{n-2}\in BS^1_{n-2}$ and $H^2_{n-2}\in BS^2_{n-2}$ of similar structure, i.e. such that if an edge $(\pi_1,\pi_2)\in H^1_{n-2}$, then it implies that $(\pi_1\,b_{n-1},\pi_2\,b_{n-1})\in H^2_{n-2}$.

The graph $BS_{n}$ is bipartite, thus any path between two nodes of the opposite parity has odd length and between the nodes of the same parity the length is even. We shall use this fact to construct the desired Hamiltonian path between $\pi$ and $\tau$. Since the graph is vertex-transitive, let us fix the vertex $\pi$ in the copy $BS^1_{n-2}$ and consider two cases of placement of the vertex $\tau$. \vspace{2mm}

\begin{figure*}[t]
\centering
\includegraphics[width=0.75\textwidth]{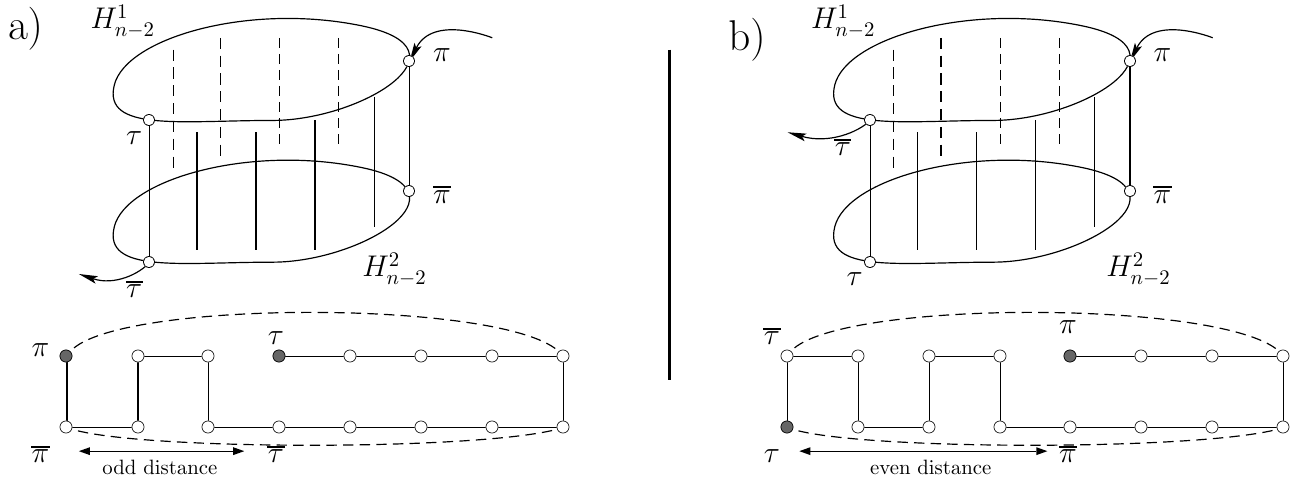}
\caption{Schematic structure of the Hamiltonian path construction in prisms from Proposition~\ref{prop.2-bsn-2_ham_path}.}
\label{fig.prism_ham_cycle}
\end{figure*}

\textbf{Case 1.} Let $\tau$ be in the same copy $BS^1_{n-2}$. Then, denote $\overline{\pi} = \pi b_{n-1}$ and $\overline{\tau} = \tau b_{n-1}$. The nodes $\overline{\pi}$ and $\overline{\tau}$ are of different parity, thus the $\pioverlinetauoverline$-path has odd length. Therefore, we can start the Hamiltonian path at $\pi$, then proceed in alternating way as shown on Figure~\ref{fig.prism_ham_cycle} until we reach $\overline{\tau}$. Then we continue the path in the copy $BS^2_{n-2}$ until we reach the direct neighbour of the vertex $\overline{\pi}$ in the $H^2_{n-2}$. Moving back to the $BS^1_{n-2}$ and following the unvisited part of $H^1_{n-2}$ we reach the vertex $\tau$, thus obtaining the desired Hamiltonian path.

\textbf{Case 2.} Let $\tau$ be in the different copy $BS^2_{n-2}$. Let us again denote $\overline{\pi} = \pi b_{n-1}$ and $\overline{\tau} = \tau b_{n-1}$. The nodes $\pi$ and $\overline{\tau}$ are of the same parity, thus the $\pitauoverline$-path has even length. Therefore, we can start the Hamiltonian path at $\pi$, then proceed along the $H^1_{n-2}$ until we reach the direct neighbour of $\overline{\tau}$ as shown on Figure~\ref{fig.prism_ham_cycle}. Then we move to the copy $BS^2_{n-2}$ and continue along the path $H^2_{n-2}$ backwards (without passing through $\tau$) until we reach the vertex $\overline{\pi}$ in the $H^2_{n-2}$. The distance between the direct neighbour of $\overline{\pi}$ and $\tau$ is odd, since the length of the $H^2_{n-2}$ is even and the traversed segment of $H^1_{n-2}$ and $H^2_{n-2}$ is of odd length, therefore we can continue in the alternating way until we reach the vertex $\tau$ (see Figure \ref{fig.prism_ham_cycle}), thus obtaining the desired Hamiltonian path.
\end{proof}

We now turn to the proof of the main theorem. The Hamiltonian cycle in $\Gamma_n$ defines the traversal sequence $\GP{n}^{(j_{1})} \rightarrow \GP{n}^{(j_{2})} \rightarrow \dots \rightarrow \GP{n}^{(j_{m})} \rightarrow \GP{n}^{(j_{1})}$ of generalized prisms and let us construct the lifted cycle in the original graph. Let us fix the vertex incident to the edge between $\GP{n}^{(j_{1})}$ and $\GP{n}^{(j_{2})}$ as a starting vertex $\pi_{(j_{1})}$ with parity $\omega$. Traversing this edge will change the parity, and the adjacent vertex $\tau_{(j_{2})}$ in $\GP{n}^{(j_{2})}$ will have an opposite parity $\overline{\omega}$. By Proposition~\ref{prop.2-bsn-2_ham_path}, there is a Hamiltonian path that ends in a vertex $\pi_{(j_{2})}$ with opposite parity $\omega$ which is adjacent to $\tau_{(j_{3})}$ in $\GP{n}^{(j_{3})}$. Traversing to $\tau_{(j_{3})}$ changes the parity again to $\overline{\omega}$, which was the case for $\tau_{(j_{2})}$. Proceeding in this way, the parity of the starting and ending vertices in the prisms will be different, hence when the construction arrives back to the starting prism $\GP{n}^{(j_{1})}$, the ending vertex $\tau_{(j_{1})}$ will have parity $\overline{\omega}$, opposite from the parity of $\pi_{(j_{1})}$. Therefore, this vertex is at the odd distance from $\pi_{(j_{1})}$ and connecting them with a Hamiltonian path will result in a Hamiltonian cycle in the original graph $BS_n$. This finishes the proof of the Theorem~\ref{th3}.
\end{proof}

\section*{Acknowledgement}
The authors thank Zahary Hamaker for referring us to permutahedra, which have direct relation to the graph under our consideration and Valeriy Bardakov for providing insights regarding permutahedra. We thank the anonymous reviewers for their valuable suggestions that helped to improve the quality of the manuscript. This work was supported partially by the grants 17-51-560008, 18-01-00353 and 19-01-00682 of the Russian Foundation for Basic Research.

\bibliographystyle{elsarticle-num-names}
\bibliography{references}

\newpage \onecolumn
\section*{Appendix}

In the Appendix we present the table of all Hamiltonian paths between any two nodes at odd distance in the 6-prism $P_6(i,k)$.

\begin{table*}[ht!]
\centering
\begin{tabular}{C{0.8cm} | L{7cm} L{7cm}}

\rule[-1ex]{0pt}{2.5ex}  & \multicolumn{2}{c}{All possible Hamiltonian paths with end vertices $(\pi,\tau)$}  \\
\hline
\rule[-1ex]{0pt}{2.5ex}  $\pi\,b_{i-1}$ & $[b_{i}\,b_{i-1}\,b_{i}\,b_{i-1}\,b_{k}\,b_{i-1}\,b_{i}\,b_{i-1}\,b_{i}\,b_{i-1}\,b_{k}]$, $[b_{i}\,b_{i-1}\,b_{i}\,b_{k}\,b_{i}\,b_{i-1}\,b_{i}\,b_{i-1}\,b_{i}\,b_{k}\,b_{i}]$, $[b_{i}\,b_{i-1}\,b_{k}\,b_{i-1}\,b_{i}\,b_{i-1}\,b_{i}\,b_{i-1}\,b_{k}\,b_{i-1}\,b_{i}]$ & $[b_{i}\,b_{k}\,b_{i}\,b_{i-1}\,b_{i}\,b_{i-1}\,b_{i}\,b_{k}\,b_{i}\,b_{i-1}\,b_{i}]$, $[b_{k}\,b_{i-1}\,b_{i}\,b_{i-1}\,b_{i}\,b_{i-1}\,b_{k}\,b_{i-1}\,b_{i}\,b_{i-1}\,b_{i}]$, $[b_{k}\,b_{i}\,b_{k}\,b_{i-1}\,b_{k}\,b_{i}\,b_{k}\,b_{i-1}\,b_{k}\,b_{i}\,b_{k}]$ \\
\hline
\rule[-1ex]{0pt}{2.5ex} $\pi\,b_{i}$ & $[b_{i-1}\,b_{i}\,b_{i-1}\,b_{i}\,b_{k}\,b_{i}\,b_{i-1}\,b_{i}\,b_{i-1}\,b_{i}\,b_{k}]$, $[b_{i-1}\,b_{i}\,b_{i-1}\,b_{k}\,b_{i-1}\,b_{i}\,b_{i-1}\,b_{i}\,b_{i-1}\,b_{k}\,b_{i-1}]$, $[b_{i-1}\,b_{i}\,b_{k}\,b_{i}\,b_{i-1}\,b_{i}\,b_{i-1}\,b_{i}\,b_{k}\,b_{i}\,b_{i-1}]$, & $[b_{i-1}\,b_{k}\,b_{i-1}\,b_{i}\,b_{i-1}\,b_{i}\,b_{i-1}\,b_{k}\,b_{i-1}\,b_{i}\,b_{i-1}]$, $[b_{k}\,b_{i-1}\,b_{k}\,b_{i}\,b_{k}\,b_{i-1}\,b_{k}\,b_{i}\,b_{k}\,b_{i-1}\,b_{k}]$, $[b_{k}\,b_{i}\,b_{i-1}\,b_{i}\,b_{i-1}\,b_{i}\,b_{k}\,b_{i}\,b_{i-1}\,b_{i}\,b_{i-1}]$ \\
\hline
\rule[-1ex]{0pt}{2.5ex} $\pi\,b_{k}$ & $[b_{i-1}\,b_{i}\,b_{i-1}\,b_{i}\,b_{i-1}\,b_{k}\,b_{i-1}\,b_{i}\,b_{i-1}\,b_{i}\,b_{i-1}]$, $[b_{i-1}\,b_{k}\,b_{i}\,b_{k}\,b_{i-1}\,b_{k}\,b_{i}\,b_{k}\,b_{i-1}\,b_{k}\,b_{i}]$, & $[b_{i}\,b_{i-1}\,b_{i}\,b_{i-1}\,b_{i}\,b_{k}\,b_{i}\,b_{i-1}\,b_{i}\,b_{i-1}\,b_{i}]$, $[b_{i}\,b_{k}\,b_{i-1}\,b_{k}\,b_{i}\,b_{k}\,b_{i-1}\,b_{k}\,b_{i}\,b_{k}\,b_{i-1}]$  \\
\hline
\rule[-1ex]{0pt}{2.5ex} $\pi\cdot b_{i-1}$ $b_{i}\,b_{i-1}$ & $[b_{i-1}\,b_{i}\,b_{k}\,b_{i}\,b_{i-1}\,b_{i}\,b_{k}\,b_{i-1}\,b_{k}\,b_{i}\,b_{k}]$, $[b_{i-1}\,b_{k}\,b_{i-1}\,b_{i}\,b_{k}\,b_{i-1}\,b_{k}\,b_{i}\,b_{i-1}\,b_{k}\,b_{i-1}]$, $[b_{i}\,b_{i-1}\,b_{k}\,b_{i-1}\,b_{i}\,b_{i-1}\,b_{k}\,b_{i}\,b_{k}\,b_{i-1}\,b_{k}]$, & $[b_{i}\,b_{k}\,b_{i}\,b_{i-1}\,b_{k}\,b_{i}\,b_{k}\,b_{i-1}\,b_{i}\,b_{k}\,b_{i}]$, $[b_{k}\,b_{i-1}\,b_{k}\,b_{i}\,b_{k}\,b_{i-1}\,b_{i}\,b_{i-1}\,b_{k}\,b_{i-1}\,b_{i}]$, $[b_{k}\,b_{i}\,b_{k}\,b_{i-1}\,b_{k}\,b_{i}\,b_{i-1}\,b_{i}\,b_{k}\,b_{i}\,b_{i-1}]$ \\
\hline
\rule[-1ex]{0pt}{2.5ex} $\pi\cdot b_{i-1}$ $b_{i}\,b_{k}$ & $[b_{i-1}\,b_{i}\,b_{i-1}\,b_{k}\,b_{i}\,b_{k}\,b_{i-1}\,b_{k}\,b_{i}\,b_{i-1}\,b_{i}]$, $[b_{i-1}\,b_{k}\,b_{i-1}\,b_{i}\,b_{k}\,b_{i-1}\,b_{k}\,b_{i}\,b_{k}\,b_{i-1}\,b_{k}]$, $[b_{i}\,b_{i-1}\,b_{i}\,b_{i-1}\,b_{i}\,b_{k}\,b_{i-1}\,b_{i}\,b_{i-1}\,b_{i}\,b_{i-1}]$, $[b_{i}\,b_{i-1}\,b_{i}\,b_{k}\,b_{i}\,b_{i-1}\,b_{i}\,b_{i-1}\,b_{k}\,b_{i}\,b_{k}]$, & $[b_{i}\,b_{i-1}\,b_{k}\,b_{i-1}\,b_{i}\,b_{i-1}\,b_{k}\,b_{i}\,b_{i-1}\,b_{k}\,b_{i-1}]$, $[b_{i}\,b_{k}\,b_{i}\,b_{i-1}\,b_{k}\,b_{i}\,b_{i-1}\,b_{i}\,b_{k}\,b_{i}\,b_{i-1}]$, $[b_{k}\,b_{i-1}\,b_{k}\,b_{i}\,b_{i-1}\,b_{i}\,b_{i-1}\,b_{k}\,b_{i-1}\,b_{i}\,b_{i-1}]$, $[b_{k}\,b_{i}\,b_{k}\,b_{i-1}\,b_{k}\,b_{i}\,b_{k}\,b_{i-1}\,b_{i}\,b_{k}\,b_{i}]$ \\
\hline
\rule[-1ex]{0pt}{2.5ex} $\pi\cdot b_{i}$ $b_{i-1}\,b_{k}$ &    $[b_{i-1}\,b_{i}\,b_{i-1}\,b_{i}\,b_{i-1}\,b_{k}\,b_{i}\,b_{i-1}\,b_{i}\,b_{i-1}\,b_{i}]$, $[b_{i-1}\,b_{i}\,b_{i-1}\,b_{k}\,b_{i-1}\,b_{i}\,b_{i-1}\,b_{i}\,b_{k}\,b_{i-1}\,b_{k}]$, $[b_{i-1}\,b_{i}\,b_{k}\,b_{i}\,b_{i-1}\,b_{i}\,b_{k}\,b_{i-1}\,b_{i}\,b_{k}\,b_{i}]$,  $[b_{i-1}\,b_{k}\,b_{i-1}\,b_{i}\,b_{k}\,b_{i-1}\,b_{i}\,b_{i-1}\,b_{k}\,b_{i-1}\,b_{i}]$, & $[b_{i}\,b_{i-1}\,b_{i}\,b_{k}\,b_{i-1}\,b_{k}\,b_{i}\,b_{k}\,b_{i-1}\,b_{i}\,b_{i-1}]$, $[b_{i}\,b_{k}\,b_{i}\,b_{i-1}\,b_{k}\,b_{i}\,b_{k}\,b_{i-1}\,b_{k}\,b_{i}\,b_{k}]$, $[b_{k}\,b_{i-1}\,b_{k}\,b_{i}\,b_{k}\,b_{i-1}\,b_{k}\,b_{i}\,b_{i-1}\,b_{k}\,b_{i-1}]$, $[b_{k}\,b_{i}\,b_{k}\,b_{i-1}\,b_{i}\,b_{i-1}\,b_{i}\,b_{k}\,b_{i}\,b_{i-1}\,b_{i}]$ \\

\end{tabular}
\caption{The list of Hamiltonian paths from the source node $\pi$ in the 6-prism $P_6(i,k)$ to any possible node with the odd distance.}
\label{tab.ham_paths_6_prism}
\end{table*}

\end{document}